\newcounter{dummy}
\newtheorem{theorem}[dummy]{Theorem}
\newtheorem*{theorem*}{Theorem}
\newtheorem*{cor*}{Corollary}
\newtheorem{prop}[dummy]{Proposition}
\newtheorem*{defn}{Definition}
\theoremstyle{remark}
\newtheorem*{remark*}{Remark}
\DeclareMathOperator{\ind}{ind}
\DeclareMathOperator{\den}{density}
\DeclareMathOperator{\Area}{Area}
\numberwithin{equation}{section}
\begin{document}
\title{A Discrete Four Vertex Theorem for Hyperbolic Polygons}
\author{Kyle Grant \and Wiktor Mogilski\footnote{Department of Mathematical Sciences, Utah Valley University, Orem, UT}}

\maketitle

\begin{abstract} There are many four vertex type theorems appearing in the literature, coming in both smooth and discrete flavors. The most familiar of these is the classical theorem in differential geometry, which states that the curvature function of a simple smooth closed curve in the plane has at least four extreme values. This theorem admits a natural discretization to Euclidean polygons due to O. Musin. In this article we adapt the techniques of Musin and prove a discrete four vertex theorem for convex hyperbolic polygons.
\vspace{1 pc}
\\
\noindent \emph{Keywords}: four vertex theorem, discrete curvature, hyperbolic polygon, evolute
\end{abstract}

\section{Introduction}

The classical four vertex theorem in differential geometry states that the curvature function of a smooth simple closed curve in the plane has at least four extreme values (the points on the curve where these occur are called \emph{vertices}). This theorem was first proved by S. Mukhopadhayaya in 1909 \cite{Mukhopadhayaya}, albeit only proving the convex case. In 1912, A. Kneser \cite{AKneser} used a projective argument to prove the general case. Later, several independent proofs of the theorem were published. A. Kneser's son H. Kneser gave his own independent direct proof ten years later in 1922, G. Herglotz proved the convex case by contradiction in 1930, and S.B. Jackson proved the theorem by categorizing all curves that have only two vertices in 1944. A much simpler proof appeared in 1985 due to R. Osserman \cite{Osserman}. Osserman's proof uses the circumcircle of the curve and deduces the number of vertices by counting how many times the circle intersects the curve.\par

The reader might wonder about variations of the above theorem in other two-dimensional geometries. In \cite{Singer}, D. Singer provides a proof of the four vertex theorem for simple closed convex curves in the hyperbolic plane by deriving it from a theorem of Ghys. In 1945, P. Scherk \cite{Scherk} observed that stereographic projection can be used to transfer problems about vertices of plane curves to problems about
curves on the sphere, thus establishing the four vertex theorem in spherical geometry.

It is interesting to note that a discrete four vertex theorem appeared about a century earlier than any of the smooth considerations. In 1813, A. Cauchy \cite{Cauchy} proved that two convex polygons, which have corresponding sides of the same length, must either have equal corresponding angles or the difference between the corresponding angles must change sign at least four times. Unfortunately, E. Steinitz found a mistake in Cauchy's proof about a hundred years later and published a correct version in 1934 (it is now known as the Cauchy-Steinitz lemma). Around this time, several other discrete four vertex theorems were published. R.C. Bose published a version in 1932 which followed from equations involving the number of empty circles and number of full circles of a polygon. A.D. Aleksandrov proved a version similar to the Cauchy-Steinitz lemma in 1950 instead considering sign changes in edges, and then S. Bilinski proved a version in 1963 which instead focused on sign changes of differences angles in a single convex polygon.

While smooth four vertex type theorems are interesting in their own right, their discrete counterparts have several advantages. For example, they are stronger than their smooth versions, implying them by passage of the limit. For another example, discrete theorems are also usually simpler to state and can usually be proved by a much easier combinatorial argument (e.g. induction).

Of particular interest to us is a notion of discrete curvature and a four vertex theorem introduced by O. Musin in \cite{Musin2}. Given a convex polygon in the plane (with some additional mild conditions that we define later) with vertices $V_{1},V_{2},...,V_{n}$, let $R_i$ denote the radius of the circle passing through the consecutive vertices $V_{i-1}V_iV_{i+1}$. We say that a vertex $V_i$ is extremal if either $R_{i-1}>R_i<R_{i+1}$ or $R_{i-1}<R_i>R_{i+1}$. Musin proves (by contradiction) that a convex polygon with at least four vertices has at least four extremal vertices. Note that this notion of discrete curvature is compatible with the standard notion of curvature in the plane. This can be seen by considering the osculating circle at any point of the smooth curve, the radius of which is the reciprocal of the curvature. Hence Musin's theorem is a discretization of the classical smooth four vertex theorem.

In \cite{Musin1}, Musin extends this notion of discrete curvature to behave well with non-convex polygons. He then introduces a new polygonal curve associated to a polygon called the \emph{discrete evolute}. This is the discretization of the evolute of a smooth curve, which is a new curve obtained from the centers of osculating circles of the original curve. It turns out that the discrete evolute can be used to detect extremal vertices. Musin exploits this fact to obtain an equality relating the discrete turning number of both the polygon and the evolute to the number of extremal vertices. A four vertex theorem is then a consequence of this equality.

In this paper we cast Musin's notion of vertex extremality into the setting of the hyperbolic plane. We then adapt his program and prove an equality relating what we call the \emph{density} of discrete hyperbolic evolute to the number of extremal vertices (Theorem 5). From this we derive a four vertex theorem for convex hyperbolic polygons (Theorem 6). This is a discrete version of a smooth four vertex theorem for hyperbolic curves with curvature everywhere greater than $1$.

\section{Discrete Curvature and Vertex Extremality}

In this paper we will restrict ourselves to the hyperbolic plane $\mathbb{H}^2$. We will begin with a few definitions and notation. By $P$ we will denote a polygonal curve in $\mathbb{H}^2$, which is simply a curve with non-ideal vertices $V_{1},V_{2},...,V_{n}$, where each consecutive pair of vertices is joined by a geodesic segment, and successive hyperbolic segments meet only at the points $V_i$. The polygonal curve $P$ is \emph{closed} if $V_1=V_n$ and we say that $P$ is \emph{simple} if it has no self intersections. For brevity, we will refer to a simple closed polygonal curve as a polygon.

In hyperbolic geometry it is possible to have three points that do not lie on the same geodesic and are not circumscribed by a hyperbolic circle. For example, the points could instead lie on a different type of curve with constant curvature such as a horocycle or a hypercycle. To ensure that the notion of a hyperbolic evolute (defined in Section 3) is a polygonal curve in the above sense, we will impose the following condition on our polygonal curves $P$. The analogous condition for smooth curves is that the curvature is everywhere greater than $1$, which is not an unusual assumption in hyperbolic geometry.

\begin{defn}
We say that a polygonal curve $P$ in $\mathbb{H}^2$ is generic if no vertices are ideal and
\begin{description}
  \item[(a)] The maximal number of vertices of $P$ that lie on a hyperbolic circle is three.
  \item[(b)] No three vertices lie on the same geodesic.
\end{description}
\end{defn}

For a generic polygonal curve $P$, let $C_{i}=C(V_{i-1}V_{i}V_{i+1})$ denote the hyperbolic circumcircle formed by the corresponding vertices of $P$, $O_{i}=O(V_{i-1}V_{i}V_{i+1})$ the center of $C_{i}$, and $R_i$ the radius of $C_i$. Note that all indices are taken modulo the number of vertices of the polygonal curve $P$.

\begin{defn}
A polygonal curve $P$ is coherent if for any three consecutive vertices $V_{i-1},V_{i},$ and $V_{i+1}$, the center of the circle $C_{i}$ lies in the infinite cone formed by the vertices $V_{i-1},V_{i},$ and $V_{i+1}$.
\end{defn}

The figure below illustrates the situation where $P$ can fail to be coherent: the mediatrices of the edges $\overline{V_{i-1}V_i}$ and $\overline{V_{i}V_{i+1}}$ intersect outside of the infinite cone formed by the three consecutive vertices.
        \begin{figure}[H]
        \centering
        \includegraphics[scale=.25]{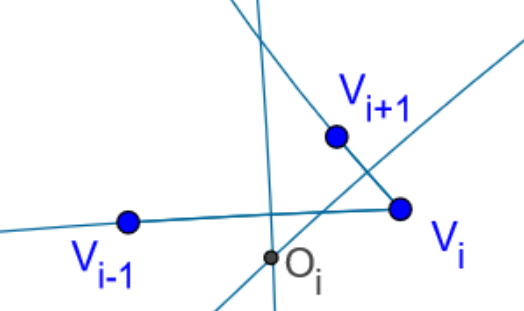}
        \caption{$P$ is not coherent}
                \label{fig:sines}
    \end{figure}

From here on, \emph{all polygonal curves will be assumed to be generic and coherent}. We will also impose the convention that we are traversing such polygons $P$ counterclockwise and we will denote the left angle at a vertex with respect to this orientation by $\angle V_i=\angle V_{i-1}V_{i}V_{i+1}$.

\begin{defn} A vertex $V_{i}$ is said to be positive if $\angle V_i$ is at most $\pi$. Otherwise, it is said to be negative. If all vertices of $P$ are positive, then $P$ is
convex.
\end{defn}

Following Musin \cite{Musin1}, we now define a notion of discrete curvature on a generic and coherent polygonal curve. Assume that a vertex $V_i$ is positive. We say that the curvature of the vertex $V_{i}$ is greater than the curvature at $V_{i+1}$ ($V_{i}\succ V_{i+1})$ if the vertex $V_{i+1}$ is positive and $V_{i+2}$ lies outside the circle $C_{i}$ or if the vertex $V_{i+1}$ is negative and $V_{i+2}$ lies inside the circle $C_{i}$.

By switching the word ``inside" with the word ``outside" in the above definition (and vice-versa), we obtain that $V_{i}\prec V_{i+1}$, or that the curvature at $V_{i}$ is less than the curvature at $V_{i+1}$.

In the case that the vertex $V_{i}$ is negative, simply switch the word ``greater" with the word ``less", and the word ``outside" by the word ``inside".

The following proposition justifies that this is in fact a reasonable discrete notion of curvature.

\begin{prop}
Let $P$ be a convex polygon.
\begin{enumerate}
  \item $V_{i-1}\prec V_{i}$ if and only if $R_{i-1}>R_i$.
  \item $V_{i-1}\succ V_{i}$ if and only if $R_{i-1}<R_i$.
\end{enumerate}
\end{prop}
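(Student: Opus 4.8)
The plan is to reduce both statements to a single question about the relative position of $V_{i+1}$ and the circle $C_{i-1}$, and then to resolve that question by parametrizing the circles through $V_{i-1}$ and $V_i$ by the position of their centers. Since $P$ is convex, every vertex is positive, so specializing the definition of $\succ$ and $\prec$ (with $i$ replaced by $i-1$) gives at once that $V_{i-1}\prec V_i$ if and only if $V_{i+1}$ lies inside $C_{i-1}$, and $V_{i-1}\succ V_i$ if and only if $V_{i+1}$ lies outside $C_{i-1}$. Because genericity forbids four concyclic vertices, $V_{i+1}$ can never lie on $C_{i-1}$, so the two cases are exhaustive and mutually exclusive; it therefore suffices to prove statement (1), that $V_{i+1}$ lies inside $C_{i-1}$ exactly when $R_{i-1}>R_i$.

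The key observation is that $C_{i-1}=C(V_{i-2}V_{i-1}V_i)$ and $C_i=C(V_{i-1}V_iV_{i+1})$ share the chord $\overline{V_{i-1}V_i}$, so their centers $O_{i-1}$ and $O_i$ both lie on the mediatrix $m$ of this segment. I would parametrize the points of $m$ by signed hyperbolic distance $t$ from the midpoint $M$ of $\overline{V_{i-1}V_i}$, taking $t>0$ on the side of the polygon's interior. Writing $\ell$ for the length of $\overline{V_{i-1}V_i}$ and $O(t)$ for the point of $m$ at parameter $t$, the hyperbolic Pythagorean theorem gives $\cosh R(t)=\cosh(\ell/2)\cosh t$ for the radius $R(t)=d(O(t),V_{i-1})$ of the circle in this pencil centered at $O(t)$; hence $R(t)$ is strictly increasing for $t\ge 0$. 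Convexity places $V_{i-2}$ and $V_{i+1}$ on the interior side of the geodesic through $V_{i-1}$ and $V_i$, and coherence forces each circumcenter to lie in the cone at the middle vertex of its defining triple, and each such cone lies on that same interior side of the geodesic through $V_{i-1}$ and $V_i$; consequently $t_{i-1}$ and $t_i$, the parameters of $O_{i-1}$ and $O_i$, are both positive, and the monotonicity above yields $R_{i-1}>R_i$ if and only if $t_{i-1}>t_i$.

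It remains to compare the position of $V_{i+1}$ to $C_{i-1}$ in terms of these parameters. For fixed $V_{i+1}$, the point $O(t)$ is closer to $V_{i+1}$ than to $V_{i-1}$ precisely when $O(t)$ lies on the $V_{i+1}$-side of the mediatrix of $\overline{V_{i-1}V_{i+1}}$; as this mediatrix is a geodesic distinct from $m$, it meets $m$ in at most one point, so as $t$ increases the circle centered at $O(t)$ captures $V_{i+1}$ across a single threshold. That threshold is exactly $t_i$, since $C_i$ is the unique member of the pencil passing through $V_{i+1}$, and letting $t\to+\infty$ shows that the inside region corresponds to $t>t_i$. Thus $V_{i+1}$ lies inside $C_{i-1}$ if and only if $t_{i-1}>t_i$, which by the previous paragraph holds if and only if $R_{i-1}>R_i$, proving (1); statement (2) follows by complementation. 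The main obstacle is this last geometric step together with the verification that coherence really does place both centers on the interior side of the common chord: in the Euclidean setting these facts are classical, but in $\mathbb{H}^2$ they must be extracted from hyperbolic trigonometry and from the uniqueness of the intersection of two geodesics, and it is precisely here that the hypotheses of convexity and coherence are essential, since without them the centers could fall on opposite sides of the chord and the monotonicity relating $R(t)$ to $t$ would break down.
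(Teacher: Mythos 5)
Your overall route is genuinely different from the paper's, and most of it is sound. The paper fixes the common chord $\overline{V_{i-1}V_i}$, forms the two isosceles triangles $\Delta V_{i-1}O_{i-1}V_i$ and $\Delta V_{i-1}O_iV_i$ on that base, argues that $V_{i+1}$ inside $C_{i-1}$ forces $O_i$ into the first triangle and hence $\angle O_i>\angle O_{i-1}$, and then converts this apex-angle comparison into a radius comparison via the hyperbolic law of sines, $\sinh(R_{i-1})\sin(\angle O_{i-1}/2)=\sinh(MV_i)=\sinh(R_i)\sin(\angle O_i/2)$. You instead parametrize the whole pencil of circles through $V_{i-1},V_i$ by the center's position $t$ on the mediatrix $m$; your formula $\cosh R(t)=\cosh(\ell/2)\cosh t$, the reduction to statement (1), the use of coherence and convexity to get $t_{i-1},t_i>0$, and the single-crossing argument (the mediatrix of $\overline{V_{i-1}V_{i+1}}$ is a geodesic distinct from $m$, hence meets it at most once, namely at $O_i$) are all correct.

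The genuine gap is the orientation of the threshold: ``letting $t\to+\infty$ shows that the inside region corresponds to $t>t_i$.'' In $\mathbb{H}^2$ this is exactly where Euclidean intuition breaks: as $t\to+\infty$ the circles of the pencil do \emph{not} exhaust the half-plane on the interior side of the geodesic $g$ through $V_{i-1},V_i$; their insides increase only up to the open horoball bounded by the horocycle through $V_{i-1},V_i$ centered at the ideal endpoint of $m$. A point on the interior side of $g$ but outside that horoball lies inside \emph{no} circle of the pencil (equivalently, its mediatrix with $V_{i-1}$ never meets $m$), so ``eventually inside'' is not free; it is the very thing to be proved. It does hold for $V_{i+1}$, but one must do the trigonometry you deferred. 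For instance, in Fermi coordinates $(u,v)$ along $m$, the interior side of $g$ is $\{u>0\}$, and ``$V_{i+1}=(u_0,v_0)$ inside the circle centered at $O(t)$'' reads $f(t):=\cosh v_0\cosh(u_0-t)-\cosh(\ell/2)\cosh t<0$, i.e.\ $f(t)=\alpha\cosh t-\beta\sinh t$ with $\alpha=\cosh v_0\cosh u_0-\cosh(\ell/2)$ and $\beta=\cosh v_0\sinh u_0$. Convexity at $V_i$ (with genericity) gives $u_0>0$, so $\beta>0$; and $f(t_i)=0$ gives $\alpha=\beta\tanh t_i$, whence $f(t)=\frac{\beta}{\cosh t_i}\sinh(t_i-t)$, which is negative precisely for $t>t_i$, as you claimed. (A first-variation argument at $t=t_i$ also works.) To be fair, the paper's own proof asserts the analogous fact from a figure --- that $V_{i+1}$ inside $C_{i-1}$ puts $O_i$ inside $\Delta V_{i-1}O_{i-1}V_i$ --- so your proposal, once this computation is supplied, is a complete and arguably more self-contained argument; without it, the key step rests on a limit that does not behave in hyperbolic geometry the way the sentence suggests.
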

\begin{proof}
As $P$ is assumed to be convex, $V_{i-1}$ and $V_{i}$ are both positive. We will only prove the first item. Assume that $V_{i-1}\prec V_{i}$ so that $V_{i+1}$ lies inside the circle $C_{i-1}$. Note that, since $P$ is coherent, $O_i$ lies on the same side of $\overleftrightarrow{V_{i-1}V_i}$ as $O_{i-1}$.

 \par
    Now, $O_i$ lies on the mediatrices of $\overline{V_{i-1}V_i}$ and $\overline{V_{i}V_{i+1}}$. Since $V_{i+1}$ is inside $C_{i-1}$, it follows that $O_i$ is inside the triangle $\Delta V_{i-1}O_{i-1}V_{i}$. Note that this is an isosceles triangle with side lengths $R_{i-1}$, $R_{i-1}$ and $V_{i-1}V_i$. We can additionally define $\Delta V_{i-1}O_iV_i$, the isosceles triangle with side lengths $R_i$, $R_i$ and $V_{i-1}V_i$. These two triangles have the same base (see the figure below).

        \begin{figure}[H]
        \centering
        \includegraphics[scale=.4]{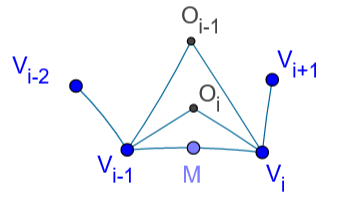}
        \caption{$V_{i-1}\prec V_{i}$}
                \label{fig:sines}
    \end{figure}

    Since $O_i$ is inside $\Delta V_{i-1}O_{i-1}V_i$, we have that $V_{i-1}\prec V_{i}$ if and only if
    \begin{equation*}
        \angle V_{i-1}V_iO_i < \angle V_{i-1}V_iO_{i-1},
    \end{equation*}
    and this happens if and only if $\angle O_i > \angle O_{i-1}$. Now consider the right triangles produced by placing a point at the midpoint $M$ of $\overline{V_{i-1}V_{i}}$. Then, by the law of sines in hyperbolic geometry,
    \begin{align*}
        \frac{\sin{\left(\frac{\angle O_{i-1}}{2}\right)}}{\sinh{(MV_i)}} & = \frac{\sin{\left(\frac{\pi}{2}\right)}}{\sinh{(R_{i-1})}} = \frac{1}{\sinh{(R_{i-1})}} \\
        \text{and }\hspace{2.5 cm}& \\
        \frac{\sin{\left(\frac{\angle O_{i}}{2}\right)}}{\sinh{(MV_i)}} & = \frac{\sin{\left(\frac{\pi}{2}\right)}}{\sinh{(R_{i})}} = \frac{1}{\sinh{(R_{i})}}. \\
    \end{align*}
    Note that since $\frac{\angle O_{i-1}}{2}$ and $\frac{\angle O_i}{2}$ are angle measures of angles in a right triangle,
    \begin{equation*}
        \frac{\angle O_{i-1}}{2} < \frac{\angle O_i}{2} \leq \frac{\pi}{2}.
    \end{equation*}
    Therefore, we have that
    \begin{align*}
        \sinh{(R_{i-1})}\sin{\left(\frac{\angle O_{i-1}}{2}\right)} & = \sinh{(R_i)}\sin{\left(\frac{\angle O_i}{2}\right)} \\
        &> \sinh{(R_i)}\sin{\left(\frac{\angle O_{i-1}}{2}\right)}.
           \end{align*}
    The above computation is equivalent to $\sinh{(R_{i-1})} > \sinh{(R_i)}$, and hence $R_{i-1} > R_i$. Thus we have shown that $V_{i-1}\prec V_{i}$ if and only if $R_{i-1} > R_i$.

\end{proof}

\begin{defn}A vertex $V_{i}$ of a polygonal line $P$ is locally maximal if

\centerline{$V_{i-1}\prec V_{i} \succ V_{i+1}$ and is locally minimal if $V_{i-1}\succ V_{i} \prec V_{i+1}$.}

\end{defn}

Note that the above notion of discrete curvature turns $P$ into a directed graph, i.e., a graph with an arrow indicating a direction on every edge.

\begin{defn}
We say that a vertex $v$ on a directed graph $G$ is locally extremal if either all edges that meet at $v$ have an arrow pointing away from $v$ (i.e. all edges exit $v$) or all edges that meet at $v$ have an arrow that points towards $v$ (i.e. all edges enter $v$).
\end{defn}

Let $l_{+}(G)$ denote the number of locally minimal vertices of $G$ (i.e. ones where all edges enter $v$) and $l_{-}(G)$ denote the number of locally maximal vertices of $G$ (i.e. vertices where all edges exit $v$).

Suppose that $G$ is planar. For a vertex $v\in G$, define the \emph{index} of $v$ to be

$$\ind(v)=1-G^{-}(v),$$

Where $G^{-}(v)$ is the number of edges exiting $v$. The following proposition implies (under the ordering defined above) that $l_+(P)=l_-(P)$.

\begin{prop}
\label{prop:maxmin}
Let $G$ be a cycle graph. Then $$l_{+}(G)=l_{-}(G).$$
\end{prop}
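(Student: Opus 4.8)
The plan is to exploit the index function $\ind(v) = 1 - G^-(v)$ just introduced, evaluating the total index $\sum_{v} \ind(v)$ over all vertices of the cycle graph $G$ in two different ways and comparing the results.

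First I would record the local picture at each vertex. Since $G$ is a cycle graph, every vertex $v$ has degree exactly two, so the number of exiting edges $G^-(v)$ can only be $0$, $1$, or $2$. A vertex is locally minimal (a sink, counted by $l_+$) precisely when both of its edges enter it, that is $G^-(v) = 0$ and $\ind(v) = 1$; it is locally maximal (a source, counted by $l_-$) precisely when both edges exit, that is $G^-(v) = 2$ and $\ind(v) = -1$; and in every remaining case exactly one edge enters and one exits, so $G^-(v) = 1$ and $\ind(v) = 0$. Grouping the index sum according to these three mutually exclusive types therefore gives
\[
\sum_{v \in G} \ind(v) = l_+(G) - l_-(G).
\]

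Next I would evaluate the same sum directly from the definition of the index:
\[
\sum_{v \in G} \ind(v) = \sum_{v \in G}\bigl(1 - G^-(v)\bigr) = |V| - \sum_{v \in G} G^-(v),
\]
where $|V|$ denotes the number of vertices. The key observation is that each directed edge exits exactly one of its two endpoints, so the quantity $\sum_{v} G^-(v)$ counts every edge exactly once and equals $|E|$, the number of edges. Since a cycle graph satisfies $|V| = |E|$, the total index vanishes, and comparing this evaluation with the previous one yields $l_+(G) - l_-(G) = 0$, which is the claim.

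The argument is essentially a double count, so there is no serious analytic obstacle; the one point that must be handled with care is the identity $\sum_v G^-(v) = |E|$, which relies on every edge carrying a single well-defined direction (guaranteed here, since the curvature ordering $\prec$/$\succ$ orients each edge of $P$) together with the equality $|V| = |E|$ special to cycle graphs. As a sanity check, one may also see the statement purely combinatorially: labeling each edge $+$ or $-$ according to whether its orientation agrees with a fixed traversal direction around the cycle, the sinks correspond exactly to the cyclic patterns $(+,-)$ and the sources to $(-,+)$, and in any cyclic binary word these two transition types must occur equally often.
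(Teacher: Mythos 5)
Your proof is correct, and it departs from the paper's at exactly one point: the justification that the total index vanishes. Both arguments begin identically, classifying each vertex of the directed cycle by $G^-(v)\in\{0,1,2\}$ and concluding that
$$\sum_{v\in G}\ind(v) \;=\; l_+(G)-l_-(G).$$
The paper then finishes by citing the discrete Poincar\'{e}--Hopf formula of Knill, $\sum_{v\in G}\ind(v)=\chi(G)$, together with the fact that a cycle graph has Euler characteristic zero. You instead prove the vanishing directly: $\sum_v \ind(v)=|V|-\sum_v G^-(v)$, and since every directed edge exits exactly one endpoint, $\sum_v G^-(v)=|E|=|V|$ for a cycle. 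This double count is precisely a self-contained proof of the Poincar\'{e}--Hopf identity in the special case needed, so your argument is more elementary and removes the external citation, while the paper's route makes visible that the statement is an instance of a general theorem valid for arbitrary directed graphs (with $\chi(G)$ replacing $0$). Your closing remark about cyclic sign patterns is a nice independent confirmation, and your caveat that every edge of $P$ must actually carry a direction is well placed; in the paper this is guaranteed by genericity, since $V_{i+2}$ can never lie exactly on the circle $C_i$, so each edge is oriented by $\prec$ or $\succ$.
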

\begin{proof}
First, note that if $v$ is locally minimal then $\ind(v)=1$ and if $v$ is locally maximal then $\ind(v)=-1$. Otherwise, $\ind(v)=0$.  Hence, $$\sum_{v\in G}\ind(v) = l_{+}(G) - l_{-}(G).$$
We are done if we establish the fact that $\sum_{v\in G}\ind(v) = 0$, but this is a special case of the discrete Poincar\'{e}--Hopf formula \cite{Knill}[Theorem 1], which states that for a directed graph $G$ one has $\sum_{v\in G} \ind(v)=\chi(G)$ (here $\chi(G)$ denotes the Euler characteristic of $G$).
\end{proof}

\section{The Evolute of a Polygon}

In this section we will define a new (possibly non-simple) polygonal curve associated to a given hyperbolic polygon $P$ called the evolute. A salient feature of the evolute is that it can be used to detect the presence of extremal vertices.

\begin{defn}
The closed polygonal curve formed by the centers $O_{1},O_{2},...,O_{n}$ is called the evolute $E(P)$ of $P$.
\end{defn}

$E(P)$ is traversed by following the consecutive order of the vertices. We will denote the left angle at a vertex $O_i$ by $\angle O_i=\angle O_{i-1}O_{i}O_{i+1}$ (see Figure 3 and Figure 4 below).

\begin{defn}
A vertex $O_i$ of the evolute is said to be a cusp if
$$|\angle O_i - \angle V_i| > \pi  .$$
\end{defn}

\begin{theorem}
A vertex $V_i$ of $P$ is extremal if and only if $O_i$ is a cusp.
\end{theorem}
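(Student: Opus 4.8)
The plan is to exploit the single most important structural feature of the evolute: consecutive centers share a mediatrix. Since $O_{i-1}$ and $O_i$ both lie on the mediatrix $m_{i-1}$ of $\overline{V_{i-1}V_i}$, while $O_i$ and $O_{i+1}$ both lie on the mediatrix $m_i$ of $\overline{V_iV_{i+1}}$, the two evolute edges meeting at $O_i$ run along $m_{i-1}$ and $m_i$, which are perpendicular to the two polygon edges meeting at $V_i$. Let $M_{i-1},M_i$ be the midpoints of $\overline{V_{i-1}V_i},\overline{V_iV_{i+1}}$. Coherence places $O_i$ in the cone at $V_i$, so $V_iM_{i-1}O_iM_i$ is a simple quadrilateral with right angles at $M_{i-1}$ and $M_i$. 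Writing $\beta=\angle M_{i-1}O_iM_i$ for the non-reflex angle between the two mediatrices, the hyperbolic angle-sum for this quadrilateral gives
$$\angle V_i+\beta=\pi-\Area(V_iM_{i-1}O_iM_i),$$
and since $\angle V_i$ and $\beta$ are both positive the area lies strictly in $(0,\pi)$. This identity will be the main computational engine.

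Next I would pin down the direction in which each evolute edge leaves $O_i$ along its mediatrix. Parametrizing $m_{i-1}$ by signed distance from $M_{i-1}$, the hyperbolic Pythagorean relation $\cosh R=\cosh(\text{dist from }M_{i-1})\cosh(\tfrac12 V_{i-1}V_i)$ shows the distance to $M_{i-1}$ is strictly increasing in the circumradius; since coherence forces $O_{i-1}$ and $O_i$ onto the same side of $\overleftrightarrow{V_{i-1}V_i}$, the ray $O_iO_{i-1}$ points away from $M_{i-1}$ exactly when $R_{i-1}>R_i$ and toward $M_{i-1}$ exactly when $R_{i-1}<R_i$; symmetrically $O_iO_{i+1}$ points away from $M_i$ iff $R_{i+1}>R_i$. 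By the preceding proposition these radius comparisons are precisely the relations $V_{i-1}\prec V_i$, $V_{i-1}\succ V_i$, and so on. Consequently $V_i$ is extremal, i.e.\ a local extremum of the radius sequence, if and only if the two evolute rays at $O_i$ point the same way relative to the midpoints (both away, or both toward), whereas at a non-extremal vertex one points toward and the other away.

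It remains to convert this dichotomy into the cusp inequality. When the two rays agree they bound one of the two vertical sectors of opening $\beta$; when they disagree they bound a sector of opening $\pi-\beta$. I would then show that the oriented left angle $\angle O_i$ records the \emph{reflex} sector $2\pi-\beta$ in the agreeing (extremal) case and the non-reflex sector $\pi-\beta$ in the disagreeing (non-extremal) case. Granting this, the quadrilateral identity yields $|\angle O_i-\angle V_i|=\pi+\Area>\pi$ when $V_i$ is extremal and $|\angle O_i-\angle V_i|=\Area<\pi$ otherwise, which is exactly the assertion that $O_i$ is a cusp iff $V_i$ is extremal.

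The main obstacle is precisely the orientation bookkeeping in this last step: deciding, consistently with the traversal convention defining $\angle O_i$, when the left angle is reflex. One cannot read this off purely locally, since the non-reflex reading in a disagreeing configuration would occasionally exceed $\pi$ and produce a spurious cusp; the reflex value must instead be forced by the global consistency of the traversal of $E(P)$, in analogy with a smooth evolute reversing direction only at a cusp. I would handle this by tracking how the perpendicularity $\overline{O_{i-1}O_i}\perp\overline{V_{i-1}V_i}$ transports the polygon's turning into the evolute's turning, so that an agreeing (extremal) vertex contributes the extra half-turn that registers as a reflex angle. A secondary point to dispatch is the case of negative vertices, where $\angle V_i>\pi$ and the quadrilateral $V_iM_{i-1}O_iM_i$ degenerates or reverses orientation; there I would rerun the same computation using the inside/outside form of the $\prec,\succ$ definition directly, rather than the radius comparison, which is only established for convex $P$.
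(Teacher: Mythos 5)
Your proposal is essentially the paper's own argument. The paper's proof also rests on the quadrilateral $V_iM_{i-1}O_iM_i$ with right angles at the two midpoints, the hyperbolic angle-sum defect (equivalent to your identity $\angle V_i+\beta=\pi-\Area$), and the identification of the left angle $\angle O_i$ as the reflex sector $2\pi-\beta$ at extremal vertices versus $\pi-\beta$ at non-extremal ones. The paper settles that identification by inspecting the configurations in Figures 3--5, and restricts attention to the convex case just as you do; your hyperbolic-Pythagoras/coherence/Proposition~1 analysis of which way each evolute edge leaves $O_i$ along its mediatrix is a more careful version of what those figures encode, and is a genuine improvement in rigor at that step.

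Where you misjudge the situation is in declaring the left-angle identification to be your ``main obstacle,'' non-local, and in need of a global turning argument for $E(P)$. It is local. Fix the counterclockwise orientation of $\mathbb{H}^2$ and the traversal of $E(P)$ induced by that of $P$, and let $\theta_1,\theta_2$ denote the directions at $O_i$ toward $M_{i-1}$ and $M_i$. Coherence and convexity, together with the counterclockwise traversal of $P$, force the quadrilateral $M_{i-1}V_iM_iO_i$ to be positively oriented; equivalently, the sector swept counterclockwise from $\theta_1$ to $\theta_2$ is the one of measure $\beta$ containing $V_i$. By your own direction analysis, the backward ray at $O_i$ is $\theta_1$ or $\theta_1+\pi$ according as $R_{i-1}<R_i$ or $R_{i-1}>R_i$, the forward ray is $\theta_2$ or $\theta_2+\pi$ according as $R_{i+1}<R_i$ or $R_{i+1}>R_i$, and the left angle is the counterclockwise angle from the forward ray to the backward ray. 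Running through the four cases gives $2\pi-\beta$ in both agreeing cases and $\pi-\beta$ in both disagreeing cases; the value $\pi+\beta$ that you feared would produce spurious cusps never occurs, so no appeal to global consistency of the traversal is needed. Indeed, deriving the reflex reading from the evolute's global turning would risk circularity, since the paper's subsequent results (Theorems 4--6) compute that turning from these very angles. With this local case check inserted, your argument is complete and coincides with the paper's proof.
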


\begin{proof}
We will only consider the case where $P$ is convex (so all vertices are positive). The remaining several non-convex cases will follow a similar routine check.

Suppose that $V_i$ is maximal. Then we have the following configuration about the vertex $V_i$ (see the figure below).
    \begin{figure}[H]
        \centering
        \includegraphics[scale=.4]{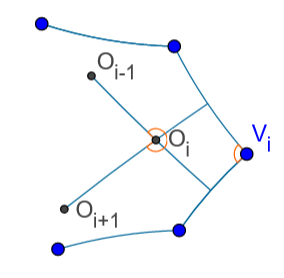}
        \caption{$V_i$ locally maximal}
        \label{fig:MaxEvolute}
    \end{figure}
    Observe that we have a quadrilateral with angle measures $\frac{\pi}{2}$, $\angle V_{i}$, $\frac{\pi}{2}$, and $2\pi - \angle O_{i}$. Since the angle sum of this quadrilateral is less $2\pi$ we have that $\angle V_{i} - \angle O_{i} < -\pi$, and hence $O_{i}$ is a cusp of $P$.

If $V_i$ is minimal, then we have the configuration illustrated in the figure below.
\begin{figure}[H]
    \centering
    \includegraphics[scale=.3]{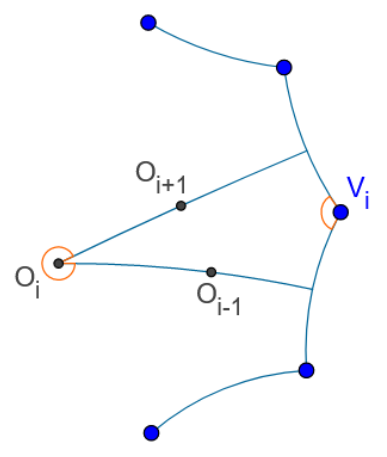}
    \caption{$V_{i}$ locally minimal}
    \label{fig:MinEvolute}
\end{figure}
We have that $\angle V_{i}+ 2\pi - \angle O_{i}<\pi$. This again implies that $\angle V_{i} - \angle O_{i} < -\pi$, and hence $O_{i}$ is a cusp of $P$.
\end{proof}

Note that $V_i$ is not extremal if and only if $|\angle O_{i} - \angle V_{i}| < \pi  $. Furthermore, if we assume that $P$ is convex, then we can omit the modulus in computations (see the proof above and the figure below). In the figure below, one has that $\angle V_{i}+ \pi+\pi - \angle O_{i}<2\pi$, and hence $0<\angle O_i - \angle V_i<\pi$.
\begin{figure}[H]
    \centering
    \includegraphics[scale=.4]{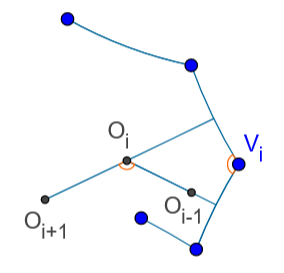}
    \caption{$V_i$ is not extremal}
    \label{fig:NonextEvolute}
\end{figure}

\section{A Discrete Hyperbolic Four Vertex Theorem}

\begin{defn}
Let $P$ be a polygonal curve with vertices $V_1,V_2,...,V_n.$ Then the polygon density of $P$, denote by $\den(P)$, is defined to be:
$$\den(P)=\frac{1}{2\pi}\sum_{i=1}^n(\pi-\angle V_{i}).$$
\end{defn}

In the Euclidean plane, the polygon density is simply equal to the winding number (also called the rotation index). In fact, for polygons in the plane, a simple triangulation argument shows that the polygon density is always equal to $1$. In contrast, the Gauss--Bonnet theorem implies that the formula for a hyperbolic polygon is $\den(P)=1 + \frac{1}{2\pi}\Area{(P)}$.

It is a well known fact that the winding number of a curve on a surface can be computed by that of its preimage with respect to the coordinate patch of the surface which contains the curve. Hence the winding number of an evolute is a negative integer when $P$ is a polygon (e.g. see the discussion in \cite{Musin1} after Theorem 3.2). We obtain the following inequality from the generalized Gauss--Bonnet theorem \cite{Cufi}[Theorem 6.1].

\begin{theorem}
Let $P$ be a polygon. Then $$\den(E(P))\leq -1.$$
\end{theorem}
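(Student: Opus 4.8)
The plan is to read $2\pi\,\den(E(P))$ as the total geodesic curvature of the evolute and then feed this into the generalized Gauss--Bonnet theorem. Since $E(P)$ is a closed geodesic polygon, its geodesic curvature is concentrated at the vertices $O_i$ as the exterior (turning) angles $\pi-\angle O_i$, so that
\[
\int_{E(P)} k_g\, ds \;=\; \sum_{i=1}^n \left(\pi - \angle O_i\right) \;=\; 2\pi\,\den(E(P)).
\]
Applying the generalized Gauss--Bonnet theorem \cite{Cufi} to the (possibly non-simple) immersed closed curve $E(P)$, and using that $\mathbb{H}^2$ has constant curvature $-1$, I would obtain
\[
\int_{E(P)} k_g\, ds \;-\; \int_{\mathbb{H}^2} w(p)\, dA(p) \;=\; 2\pi\,\mu(E(P)),
\]
where $w(p)$ is the winding number of $E(P)$ about a point $p$ and $\mu(E(P))$ denotes the rotation index (turning number) of $E(P)$. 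Rearranging yields the key identity
\[
\den(E(P)) \;=\; \mu(E(P)) \;+\; \frac{1}{2\pi}\int_{\mathbb{H}^2} w(p)\, dA(p).
\]
As a sanity check, the same computation applied to a simple counterclockwise polygon $P$ (where $\mu(P)=1$ and $w\equiv 1$ on the interior) reproduces the stated formula $\den(P)=1+\frac{1}{2\pi}\Area(P)$.

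The next step is to show the rotation index is a negative integer, i.e. $\mu(E(P))\le -1$. The rotation index is a topological invariant of the immersed curve and may be computed in any contractible coordinate chart; passing to the Poincar\'e disk model identifies $\mu(E(P))$ with the ordinary Euclidean turning number of the image of $E(P)$. This is precisely the fact, recalled before the statement, that the winding number of a curve on a surface equals that of its preimage in a coordinate patch. I would then establish that this turning number is negative by adapting Musin's mechanism \cite{Musin1}: away from cusps the unit tangent of the evolute runs along the normal (mediatrix) direction of $P$ and reverses exactly at the cusps, so that the tangent-direction map of $E(P)$ has negative degree. Since $P$ is simple, its image in the chart is a simple closed curve of rotation index $+1$, which forces the evolute's tangent to sweep out total angle at most $-2\pi$, giving $\mu(E(P))\le -1$.

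Finally I would control the area term. Because the hyperbolic area form is positive and the evolute is negatively oriented, the winding number function should satisfy $w(p)\le 0$ for all $p$, so that $\frac{1}{2\pi}\int_{\mathbb{H}^2} w\, dA \le 0$. Combining this with the previous step gives
\[
\den(E(P)) \;=\; \mu(E(P)) + \frac{1}{2\pi}\int_{\mathbb{H}^2} w\, dA \;\le\; \mu(E(P)) \;\le\; -1,
\]
as desired. I expect the main obstacle to be the middle step: pinning down the sign of the rotation index rigorously without invoking the four vertex theorem (that is the downstream goal, so counting cusps would be circular), compounded by the fact that the hyperbolic evolute is not the Euclidean evolute of any planar polygon in the chart, so Musin's result cannot be quoted verbatim. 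A secondary delicate point is the claim $w\le 0$ everywhere rather than merely $\mu\le -1$, since a self-intersecting evolute could a priori enclose regions of mixed winding sign; I would address this either by tracking the circumcenters $O_i$ under a continuous deformation of the metric (along which $\mu$ is integer-valued and continuous, hence constant) while monitoring that the triples remain concyclic, or by arguing directly that the coherence of $P$ forces the winding number to be non-positive on every complementary region.
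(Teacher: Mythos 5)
Your proposal is essentially the paper's own argument: the paper establishes this inequality precisely by combining the coordinate-chart fact that the winding number of the evolute is a negative integer (citing the discussion in \cite{Musin1} after Theorem 3.2) with the generalized Gauss--Bonnet theorem of \cite{Cufi}. The delicate points you flag---that Musin's Euclidean argument must be adapted rather than quoted verbatim, and that the algebraic-area term $\frac{1}{2\pi}\int w\,dA$ must be shown to be non-positive---are genuine, but the paper leaves them implicit as well, so your write-up is if anything more explicit than the proof it is being compared against.
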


We now will prove an equality relating the density of a polygon and its evolute to the number of extremal vertices of $P$. We will use it to derive a discrete four vertex theorem for convex hyperbolic polygons.

\begin{theorem}
Let $P$ be a convex polygon and let $N$ denote the number of locally extremal vertices of $P$. Then

$$2\den(P)-2\den(E(P))=N+\frac{1}{\pi}\sum_{i=1}^n\delta_i,$$
where $\delta_i$ is the defect of the quadrilateral formed by $V_i$, $O_i$ and the midpoints of $\overline{V_{i-1}V_i}$ and $\overline{V_{i}V_{i+1}}$.
\end{theorem}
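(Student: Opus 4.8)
The plan is to reduce the identity to a single per-vertex statement and then sum. First I would expand the left-hand side using the definition of polygon density applied to both $P$ and its evolute. Since $E(P)$ is itself a closed polygonal curve with vertices $O_i$ and left angles $\angle O_i$, the same definition gives $\den(E(P)) = \frac{1}{2\pi}\sum_i(\pi - \angle O_i)$, and hence
$$2\den(P) - 2\den(E(P)) = \frac{1}{\pi}\sum_{i=1}^n(\angle O_i - \angle V_i).$$
Thus it suffices to prove the per-vertex identity $\angle O_i - \angle V_i = \delta_i + \pi$ when $V_i$ is extremal and $\angle O_i - \angle V_i = \delta_i$ when it is not: summing these over $i$ and dividing by $\pi$ yields exactly $N + \frac{1}{\pi}\sum_i\delta_i$.

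Next I would analyze the quadrilateral $Q_i$ with vertices $V_i$, $M$, $O_i$, $M'$, where $M$ and $M'$ are the midpoints of $\overline{V_{i-1}V_i}$ and $\overline{V_iV_{i+1}}$. The geometric facts I would record are: (i) since $O_i$ is the circumcenter of $V_{i-1}V_iV_{i+1}$ it lies on the mediatrices of both chords, so $O_iM$ and $O_iM'$ are perpendicular to the respective edges and $Q_i$ has right angles at $M$ and $M'$; (ii) the interior angle of $Q_i$ at $V_i$ is exactly $\angle V_i$; and (iii) because $O_{i-1}$ and $O_i$ are both equidistant from $V_{i-1}$ and $V_i$, the evolute edge $\overline{O_{i-1}O_i}$ lies along the mediatrix of $\overline{V_{i-1}V_i}$, i.e.\ along the line $O_iM$, and likewise $\overline{O_iO_{i+1}}$ lies along the line $O_iM'$. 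Consequently the evolute angle $\angle O_i$ and the angle $\beta_i$ of $Q_i$ at $O_i$ are cut off by the same pair of lines through $O_i$, so $\beta_i$ equals either $\pi - \angle O_i$ or $2\pi - \angle O_i$. These are precisely the two configurations already computed in the proof of Theorem 4: the non-extremal case gives $\beta_i = \pi - \angle O_i$, while the (locally maximal or locally minimal) extremal case gives $\beta_i = 2\pi - \angle O_i$.

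With these angle identifications the proof finishes via the hyperbolic defect, which equals $2\pi$ minus the angle sum of $Q_i$. In the non-extremal case the angle sum is $\frac{\pi}{2} + \angle V_i + \frac{\pi}{2} + (\pi - \angle O_i) = 2\pi + \angle V_i - \angle O_i$, so $\delta_i = \angle O_i - \angle V_i$; in the extremal case it is $\frac{\pi}{2} + \angle V_i + \frac{\pi}{2} + (2\pi - \angle O_i) = 3\pi + \angle V_i - \angle O_i$, so $\delta_i = \angle O_i - \angle V_i - \pi$. This is exactly the per-vertex identity, and summing over all $i$ completes the argument.

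I expect the main obstacle to be step (iii): justifying that $\beta_i$ is $\pi - \angle O_i$ versus $2\pi - \angle O_i$ precisely according to whether $V_i$ is non-extremal or extremal. This amounts to tracking the orientation of the evolute edges along the mediatrices, i.e.\ whether the rays $O_i \to O_{i\pm 1}$ agree with or oppose the rays $O_i \to M$ and $O_i \to M'$, which is the content of the cusp characterization in Theorem 4. Since the statement is restricted to convex $P$, only the locally maximal, locally minimal, and non-extremal configurations occur, so the verification reduces to the three cases treated there; the bookkeeping of the reflex angle in the locally minimal case is the most delicate point and should be cross-checked against the figures.
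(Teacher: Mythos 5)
Your proposal is correct and follows essentially the same route as the paper: expand $2\den(P)-2\den(E(P))$ as $\frac{1}{\pi}\sum_i(\angle O_i - \angle V_i)$, then use the quadrilateral $V_i M O_i M'$ with right angles at the midpoints and the cusp-versus-non-cusp identification of its angle at $O_i$ (namely $2\pi - \angle O_i$ in the extremal case and $\pi - \angle O_i$ otherwise) to convert each term into $\delta_i + \pi$ or $\delta_i$ via the hyperbolic defect. The only slip is a reference label --- the cusp characterization you invoke is Theorem 3 of the paper, not Theorem 4 --- and its proof together with the remark following it supplies exactly the orientation bookkeeping you flag as the delicate point.
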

\begin{proof}
Unwinding definitions, we have that
\begin{align*}
    2\den(P)-2\den(E(P)) & = \frac{1}{\pi}\sum_{i=1}^n(\pi - \angle V_i) - \frac{1}{\pi}\sum_{i=1}^n(\pi - \angle O_i) \\
    & = \frac{1}{\pi}\sum_{i=1}^n(\angle O_i - \angle V_i).
\end{align*}
By Theorem 3, $V_i$ is locally extremal if and only if $\angle O_i - \angle V_i > \pi$. In particular, $\angle O_i - \angle V_i = \pi + \delta_i$ where $\delta_i = \pi - \angle V_i - \alpha_i$ and $\alpha_i$ is the angle opposite to $\angle V_i$ in the quadrilateral in the statement of the theorem. Furthermore, $V_i$ is not extremal if and only if $0 < \angle O_i - \angle V_i < \pi$. In particular, $\angle O_i - \angle V_i = \delta_i$. Thus, by the equation above,
\begin{equation*}
    2\den{(P)} - 2\den{(E(P))} = N + \frac{1}{\pi}\sum_{i=1}^n\delta_i.
\end{equation*}
\end{proof}

We are now ready to derive our hyperbolic four vertex theorem.

\begin{theorem}
Every convex hyperbolic polygon with at least four vertices has at least four locally extremal vertices.
\end{theorem}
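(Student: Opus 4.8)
The plan is to read the four vertex theorem off the exact identity in Theorem 5 by controlling each of its three terms. Rewriting that identity as
\[
N = 2\den(P) - 2\den(E(P)) - \frac{1}{\pi}\sum_{i=1}^{n}\delta_i,
\]
I would first substitute the Gauss--Bonnet value $\den(P) = 1 + \frac{1}{2\pi}\Area(P)$, so that $2\den(P) = 2 + \frac{1}{\pi}\Area(P)$, and then invoke Theorem 4 in the form $-2\den(E(P)) \ge 2$. Together these give
\[
N \ge 4 + \frac{1}{\pi}\Bigl(\Area(P) - \sum_{i=1}^{n}\delta_i\Bigr),
\]
so the whole theorem reduces to the single geometric inequality $\sum_i \delta_i \le \Area(P)$. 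I would stress that this really is the crux: since $\den(P)$ exceeds $1$ only strictly and each $\delta_i > 0$, no cruder estimate works, and it is exactly this inequality that excludes the forbidden configurations $N = 0$ and $N = 2$ (recall that $N$ is even, since locally maximal and locally minimal vertices alternate and are equal in number by Proposition \ref{prop:maxmin}).

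To attack $\sum_i \delta_i \le \Area(P)$, I would use that in the hyperbolic plane the defect of a quadrilateral equals its area, so $\delta_i = \Area(Q_i)$, where $Q_i$ is the quadrilateral with vertices $V_i$, the midpoint $M_{i-1}$ of $\overline{V_{i-1}V_i}$, the center $O_i$, and the midpoint $M_i$ of $\overline{V_iV_{i+1}}$. The goal is then to show that these $n$ quadrilaterals sit inside $P$ with pairwise disjoint interiors, whence $\sum_i \Area(Q_i) = \Area\bigl(\bigcup_i Q_i\bigr) \le \Area(P)$. The structural fact that makes this plausible is that $O_i$, $M_i$, and $O_{i+1}$ are collinear: all three lie on the mediatrix of $\overline{V_iV_{i+1}}$, so consecutive quadrilaterals $Q_i$ and $Q_{i+1}$ abut along this mediatrix and are separated by it, while the half-edges $\overline{V_iM_i}$ and $\overline{M_iV_{i+1}}$ reassemble the boundary of $P$. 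Splitting each $Q_i$ along the diagonal $\overline{V_iO_i}$ into the right triangles $\triangle V_iM_{i-1}O_i$ and $\triangle V_iM_iO_i$ gives a convenient bookkeeping of how the pieces meet edge by edge.

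The hard part will be the extremal vertices. When $V_i$ is locally extremal, $O_i$ is a cusp of the evolute, the quadrilateral $Q_i$ is non-convex with its reflex angle $2\pi - \angle O_i$ at $O_i$, and the center itself may lie outside $P$ (for a shallow vertex the circumradius is large). In these cases $Q_i$ can protrude from the polygon, so the naive containment argument fails. To repair it I would either pass to signed areas, orienting each $Q_i$ so that the protruding and doubly-covered parts cancel against the contribution recorded by the negative winding of $E(P)$, or dispose of the extremal pieces by a direct case analysis using coherence and convexity to bound the overshoot. Establishing this area comparison cleanly is the main obstacle; once it is in hand the displayed inequality yields $N \ge 4$ immediately, and tracking the equality case $\sum_i \delta_i = \Area(P)$ together with $\den(E(P)) = -1$ recovers the sharp configurations, such as a generic convex quadrilateral.
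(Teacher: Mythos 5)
There is a genuine gap, and it is not the one you flagged: the inequality $\sum_i\delta_i\le\Area(P)$ to which you reduce the theorem is false in general, so no amount of care with protruding quadrilaterals or signed-area cancellations can rescue this route. To see this, use the formula $\delta_i=\pi-\angle V_i-\alpha_i$ from the proof of Theorem 5 together with the definition of density and Gauss--Bonnet:
\begin{equation*}
\sum_{i=1}^n\delta_i \;=\; 2\pi\den(P)-\sum_{i=1}^n\alpha_i \;=\; 2\pi+\Area(P)-\sum_{i=1}^n\alpha_i,
\end{equation*}
so your inequality is equivalent to $\sum_i\alpha_i\ge 2\pi$, which by the identity of Theorem 5 (rewritten as $N=\frac{1}{\pi}\sum_i\alpha_i-2\den(E(P))$) says exactly $N\ge 2-2\den(E(P))$. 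That is strictly stronger than $N\ge 4$ whenever $\den(E(P))<-1$, and $\den(E(P))<-1$ is the generic case: Theorem 4 is only an inequality because the generalized Gauss--Bonnet theorem adds to the (integer, $\le -1$) winding number of $E(P)$ a non-positive term proportional to the area enclosed by the evolute counted with multiplicity, so equality $\den(E(P))=-1$ forces that enclosed area to vanish. Concretely, for any convex polygon with exactly four extremal vertices whose evolute encloses positive area --- e.g.\ a generic convex quadrilateral, the very example you propose as the sharp equality case, or a fine polygonal approximation of a smooth convex curve with exactly four vertices --- Theorem 5 gives
\begin{equation*}
\sum_{i=1}^n\delta_i \;=\; \Area(P)+2\pi\bigl(-1-\den(E(P))\bigr) \;>\; \Area(P).
\end{equation*}
Your double-covering intuition is in fact correct: the quadrilaterals $Q_i$ overlap over the region enclosed by the evolute; but that overlap is precisely the excess $\sum_i\delta_i-\Area(P)$, and it carries the wrong sign for you --- it does not cancel.

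The paper's proof goes the opposite way and is much simpler. Substituting $\delta_i=\pi-\angle V_i-\alpha_i$ into Theorem 5 makes the $2\den(P)$ terms cancel identically, leaving $N=\frac{1}{\pi}\sum_i\alpha_i-2\den(E(P))$. Now only crude bounds are needed: $\sum_i\alpha_i>0$ (the $\alpha_i$ are angles of nondegenerate quadrilaterals) and $-2\den(E(P))\ge 2$ (Theorem 4), giving $N>2$; since $N$ is even by Proposition \ref{prop:maxmin}, $N\ge 4$. Note two reversals of your assessment: the ``cruder estimate'' you declared impossible (using $\sum_i\alpha_i>0$ in place of $\sum_i\alpha_i\ge 2\pi$) is exactly what works, and the parity argument you mentioned only parenthetically is not optional --- it is the step that bridges $N>2$ to $N\ge 4$.
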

\begin{proof}
Let $P$ be a convex polygon with $n\geq 4$ vertices and $N$ locally extremal vertices. By Theorem 5,
\begin{equation*}
    2\den(P)-2\den(E(P))=N+\frac{1}{\pi}\sum_{i=1}^n\delta_i,
\end{equation*}
where $\delta_i = \pi - \angle V_i - \alpha_i$ (see the proof of Theorem 5).

By the definition of $\den{(P)}$, the above equation rewrites as,

   $$ \frac{1}{\pi}\sum_{i=1}^n\alpha_i -2\den{(E(P))} = N.$$

By Theorem 4, $-2\den{(E(P))} \geq 2$ and furthermore $\frac{1}{\pi}\sum_{i=1}^n\alpha_i > 0$, hence $N>2$. By Proposition 2, the number of maximal extremal vertices must be equal to the number of minimal extremal vertices. That is, $N$ must be even. Therefore $N\geq 4$.
\end{proof}

\begin{bibdiv}
\begin{biblist}

\bib{Cauchy}{article}{
  title={Recherches sur les poly{\`e}dres (Premier M{\'e}moire)},
  author={A. L. Cauchy},
  journal={Journal de l'Ecole Polytechnique},
  date={1813},
  number={9}
}

\bib{Cufi}{article}{
   author={J. Cuf\'{i} \& A. Revent\'{o}s},
   title={Evolutes and Isoperimetric Deficit in Two-Dimensional Spaces of Constant Curvature},
   journal={Archivum Mathematicum},
   volume={},
   date={2014},
   number={50},
   issn={},
}

\bib{AKneser}{article}{
  title={Bemerkungen \"{u}ber die Anzahl der Extrema der Kr\"{u}mmung auf geschlossenen Kurven und \"{u}ber verwandte Fragen in einer nicht euklidischen Geometrie},
  Author={A. Kneser},
  journal={Festschrift Heinrich Weber. Teubner.},
  volume={1},
  date={1912},
  pages={170–-180}
}

\bib{Knill}{article}{
   author={O. Knill},
   title={A Graph Theoretical Poincar\'{e}-Hopf Theorem},
   journal={arXiv 1201.1162},
   volume={},
   date={2012},
   number={},
   issn={},
}

\bib{Mukhopadhayaya}{article}{
  title={New methods in the geometry of a plane arc},
  Author={S. Mukhopadhayaya},
  journal={Bull. Calcutta Math. Soc.},
  volume={1},
  date={1909},
  pages={31--37}
}

\bib{Musin2}{article}{
  title={A four-vertex theorem for a polygon},
  author={O. Musin},
  journal={Kvant},
  volume={2},
  pages={11--13},
  year={1997}
}

\bib{Musin1}{article}{
   author={O. Musin},
   title={Curvature Extrema and Four Vertex Theorems for Polygons and Polyhedra},
   journal={Journal of Mathematical Sciences},
   volume={},
   date={2004},
   number={119},
   issn={},
}

\bib{Osserman}{article}{
  title={The four-or-more Vertex Theorem},
  Author={R. Osserman},
  journal={Amer. Math. Monthly 92},
  volume={5},
  date={1985},
  pages={332--337}
}

\bib{Scherk}{article}{
  title={The four-vertex theorem},
  author={P. Scherk},
  booktitle={Proc. First Canadian Math. Congress, Montreal},
  pages={97--102},
  year={1945}
}

\bib{Singer}{article}{
  title={Diffeomorphisms of the circle and hyperbolic curvature},
  author={D. Singer},
  journal={Conformal Geometry and Dynamics of the American Mathematical Society},
  volume={5},
  number={1},
  pages={1--5},
  year={2001}
}

\end{biblist}
\end{bibdiv}

\end{document}